\newcommand{\cf}{cf.\ }
\newcommand{\RR}{\ensuremath{\mathbb{R}}}
\newcommand{\PP}{\ensuremath{\mathbb{P}}}
\newcommand{\distr}{\ensuremath{\mathcal{D}}}
\def\ci{\perp\!\!\!\perp}
\newcommand{\D}{\ensuremath{\text{\normalfont d}}}
\newcommand{\subs}{\ensuremath{\mathcal{C}}}
\theoremstyle{plain}
\newtheorem{theorem}{Theorem}
\newtheorem{proposition}[theorem]{Proposition}
\newtheorem{corollary}[theorem]{Corollary}
\newtheorem{lemma}[theorem]{Lemma}
\theoremstyle{definition}
\newtheorem{example}{Example}
\theoremstyle{remark}
\newtheorem*{remark}{Remark}
\begin{document}

\title{Conditional independence among max-stable laws} 

\author{
  Ioannis Papastathopoulos\footnote{School of Mathematics and Maxwell Institute, University of Edinburgh, Edinburgh, EH9 3FD Email: i.papastathopoulos@ed.ac.uk},\\
  Kirstin Strokorb\footnote{Institute of Mathematics, University of
    Mannheim, D-68131 Mannheim, Email: strokorb@math.uni-mannheim.de} }

\maketitle 

\begin{abstract}
Let $X$ be a max-stable random vector with positive continuous density.
It is proved that the conditional independence of any collection of disjoint subvectors of $X$ given the remaining components implies  their joint independence.
We conclude that a broad class of tractable max-stable models cannot exhibit an interesting Markov structure.
\end{abstract}

{\small
\noindent \textit{Keywords}: {Conditional independence, exponent measure, Markov structure, max-stable random vector, M{\"o}bius inversion}\\ 
\noindent \textit{2010 MSC}: {Primary 60G70;} \\ 
\phantom{\textit{2010 MSC}:} {Secondary 62H05}
}


\section{Introduction}\label{sect:intro}

As pointed out by \cite{daw79} \emph{independence} and
\emph{conditional independence} are key concepts in the theory of
probability and statistical inference.  A collection of (not
necessarily real-valued) random variables $Y_{1},\dots,Y_{k}$ on some
probability space $(\Omega,\mathcal{A},\PP)$ are called {conditionally
  independent} given the random variable $Z$ (on the same probability space) 
if
\begin{align*}
  \PP(Y_1\in A_1,\dots,Y_k \in A_k \mid Z) = \prod_{i=1}^k
  \PP(Y_i \in A_i \mid  Z) \qquad \PP\text{-a.s.},
\end{align*}
for any measurable sets $A_1,\dots,A_k$ from the respective state
spaces. The conditioning is meant with respect to the $\sigma$-algebra generated by $Z$.  
A particularly important example for the conditional independence to be
an omnipresent attribute are the \emph{Gaussian Markov random fields}
that have evolved as a useful tool in spatial statistics
\citep{ruehel05,lau96}.  Here, the zeroes of the \emph{precision
  matrix} (the inverse of the covariance matrix) of a Gaussian random
vector represent precisely the conditional independence of the
respective components conditioned on the remaing components of the
random vector.  Hence, sparse precision matrices are desirable for
statistical inference.

In the analysis of the extreme values of a distribution (rather than
fluctuations around mean values) \emph{max-stable} models have been
frequently considered. We refer to
\cite{engetal14,navetal09,bladav11,buisetal08} for some spatial
applications among many others.  Their popularity originates from the
fact that max-stable distributions arise precisely as possible limits
of location-scale normalizations of i.i.d.\ random
elements. 
A random vector $X$ is called {max-stable} if it satisfies the
distributional equality $a_n X + b_n \stackrel{\distr}{=}
\max(X^{(1)},\dots,X^{(n)})$ for independent copies
$X^{(1)},\dots,X^{(n)}$ of $X$ for some appropriate normalizing
sequences $a_n>0$ and $b_n \in \RR$, where all operations are meant
componentwise. 
If the components $X_i$ of $X$
are \emph{standard Fr{\'e}chet} distributed, i.e. $\PP(X_i \leq
x)=\exp(-1/x)$ for $x \in (0,\infty)$, we have $a_n=n$ and $b_n=0$ and
the random vector $X$ will be called \emph{simple max-stable}. 

Let $I$ be a non-empty finite set. It is
well-known (cf.\ e.g.\ \cite{res08}) that the distribution functions
$G$ of simple max-stable random vectors $X=(X_i)_{i \in I}$ are in a
one-to-one correspondence with Radon measures $H$ on some reference
sphere $S_+=\{ \omega \in [0,\infty)^I : \lVert \omega \rVert = 1\}$
that satisfy the moment conditions $\int \omega_i H(\D \omega)=1$, $i
\in I$. The correspondence between $G$ and $H$ is given by the
relation
\begin{align*}
  G(x)=\PP(X_i \leq x_i, \, i \in I) = \exp\left(- \int_{S_+}
    \max_{i \in I} \frac{\omega_i}{x_i}~H(\D \omega)\right), \qquad
  x \in (0,\infty)^I.
\end{align*}
Here, $\lVert \cdot \rVert$ can be any norm on $\RR^I$ and $H$ is
often called \emph{angular} or \emph{spectral measure}.

In general, neither does independence imply conditional independence
nor does conditional independence imply independence of the subvectors
of a random vector.  Consider the following two simple examples which
illustrate this fact in the case of Gaussian random vectors (Example
\ref{ex:gauss}) and max-stable random vectors (Example \ref{ex:max}).
For notational convenience, we write \mbox{$X \ci Y$} if $X$ and $Y$ are
independent and \mbox{$X \ci Y \mid Z$} if $X$ and $Y$ are conditionally
independent given $Z$ and likewise use the instructive notation
\mbox{$\ci_{i=1}^k X_i$} and \mbox{$\ci_{i=1}^k X_i \mid Z$} if more than two random
elements are involved.

\begin{example}\label{ex:gauss}
  Let $X_1,X_2,X_3$ be three independent standard normal random
  variables and, moreover, $X_4=X_1+X_2$ and $X_5=X_1+X_2+X_3$. Then
  all subvectors of $(X_i)_{i=1}^5$ are Gaussian and
  \begin{align}
    \label{eq:indep:example} &X_1 \ci X_2,  && \hspace{-2cm} \text{but not} \qquad X_1 \ci X_2 \mid X_5,\\
    \label{eq:cindep:example} \text{whereas} \qquad &X_1 \ci X_5 \mid
    X_4, && \hspace{-2cm} \text{but not} \qquad X_1 \ci X_5.
  \end{align}
\end{example}

\begin{example} \label{ex:max}
  Let $X_1,X_2,X_3$ be three independent standard Fr{\'e}chet random
  variables and, moreover, $X_4= \max(X_1, X_2)$ and $X_5= \max(X_1 ,X_2,X_3)$.
  Then all subvectors of $(X_i)_{i=1,\dots,5}$ are
  max-stable and both relations \eqref{eq:indep:example} and
  \eqref{eq:cindep:example} hold true also in this setting.
\end{example}

However, if the distribution of a max-stable random vector has a
{positive continuous density}, then conditional independence of any two
subvectors conditioned on the remaining components implies already 
their independence. 
To be precise, when we say that a random vector has a \emph{positive continuous density}, we mean that the joint distribution of its components has a positive continuous density.
The following theorem is the main result of the
present article. If $X=(X_i)_{i \in I}$ is a random vector, we write
$X_A$ for the subvector $(X_i)_{i \in A}$ if $A \subset I$. The same convention
applies to non-random vectors $x=(x_i)_{i \in I}$.

\begin{theorem}\label{thm:CImaxstabledensity}
  Let $X=(X_i)_{i \in I}$ be a simple max-stable random vector with
  positive continuous density. Then, for any disjoint non-empty subsets $A$ and $B$ of $I$,  the conditional independence 
  \mbox{$X_A \ci X_B \mid X_{I \setminus(A \cup B)}$} implies the independence
  \mbox{$X_A \ci X_B$}.
\end{theorem}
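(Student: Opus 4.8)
The plan is to convert the probabilistic hypothesis into an analytic identity for the exponent function $V$ given by $G=e^{-V}$, and then to exploit the rigid way in which the density of a simple max-stable law is assembled from $V$. Write $C:=I\setminus(A\cup B)$. Because $X$ has a positive continuous density, the conditional independence $X_A\ci X_B\mid X_C$ is equivalent to the factorization $g(x)=\varphi(x_{A\cup C})\,\psi(x_{B\cup C})$ of the joint density $g=\partial_I G$ for positive functions $\varphi,\psi$, where $\partial_D$ denotes the mixed derivative in the coordinates of $D$. Taking logarithms, this is in turn equivalent to the vanishing of every mixed derivative of $\log g$ that involves at least one coordinate from $A$ and one from $B$; in particular I would record
\begin{align*}
  \partial_a\partial_b\log g(x)=0\qquad\text{for all } a\in A,\ b\in B,\ x\in(0,\infty)^I.
\end{align*}

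Next I would insert the explicit density. Differentiating $G=e^{-V}$ once in each coordinate gives, by the partition (Fa\`a di Bruno) expansion,
\begin{align*}
  g(x)=e^{-V(x)}\,P(x),\qquad P(x)=\sum_{\pi\in\mathcal{P}(I)}\ \prod_{b\in\pi}\bigl(-\partial_b V(x)\bigr),
\end{align*}
where $\mathcal{P}(I)$ is the lattice of set partitions of $I$. Positivity of the density forces $P>0$, so $\log P$ is a well-defined smooth function and $\log g=-V+\log P$. Combining this with the previous display yields the key identity
\begin{align*}
  \partial_a\partial_b V(x)=\partial_a\partial_b\log P(x)\qquad(a\in A,\ b\in B),
\end{align*}
valid on the whole positive orthant.

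I would then reformulate the conclusion. Independence $X_A\ci X_B$ is equivalent to $V_{A\cup B}=V_A+V_B$ for the exponent function $V_{A\cup B}$ of the subvector $X_{A\cup B}$, i.e.\ to $\partial_a\partial_b V_{A\cup B}\equiv 0$, i.e.\ to the spectral measure $H$ charging no mass on $\{\omega:\omega_A\ne 0,\ \omega_B\ne 0\}$. Since letting $x_C\to\infty$ turns $V$ into $V_{A\cup B}$ together with its derivatives, the target is
\begin{align*}
  \lim_{x_C\to\infty}\partial_a\partial_b V(x)=\partial_a\partial_b V_{A\cup B}(x_{A\cup B})=0 .
\end{align*}
By the key identity it therefore suffices to prove that $\lim_{x_C\to\infty}\partial_a\partial_b\log P(x)=0$.

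The hard part will be the analysis of $P$ as $x_C\to\infty$. As the conditioning coordinates grow, $\partial_D V\to\partial_D V_{A\cup B}$ when $D\subseteq A\cup B$, while $\partial_D V\to 0$ whenever $D$ meets $C$; hence $P\to 0$, and the whole difficulty is to extract the finite limit of $\partial_a\partial_b\log P$ from a quantity that itself degenerates. I would expand $P$ according to how each $\pi\in\mathcal{P}(I)$ separates $A\cup B$ from $C$ and use M\"obius inversion on the partition lattice to isolate the leading $x_C$-profile, which splits as a factor depending on $x_C$ alone times a factor built from $V_{A\cup B}$; the former drops out under $\partial_a\partial_b\log$, and one must then force the remaining $A$--$B$ interaction to vanish. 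The main obstacle is precisely this bookkeeping: because several partition types contribute at the same order, the leading term does not factor through $V_{A\cup B}$ alone, so one has to use the identity at all $x_C$ (equivalently, match several orders of the expansion) to close the argument. Throughout, the positive-density hypothesis is essential, as it guarantees $P>0$ so that $\log P$ and its derivatives exist and keeps the limiting coefficients non-degenerate, which is what makes the final vanishing of $\partial_a\partial_b V_{A\cup B}$ genuine rather than vacuous.
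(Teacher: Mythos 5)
Your reduction is on the right track and in fact parallels the paper's: writing $g=e^{-V}P$ and turning conditional independence into an identity that pins the $A$--$B$ interaction of $V$ against that of $\log P$ is essentially a differentiated form of the identity the paper derives, namely $\exp\left(d_{A,B}(x)\right)=W^{A\cup C}_C(x_{A\cup C})W^{B\cup C}_C(x_{B\cup C})/\bigl(W^{I}_C(x)W^C_C(x_C)\bigr)$ with $d_{A,B}:=V^{A\cup C}+V^{B\cup C}-V-V^C$. But your proof stops exactly where the work begins: the claim $\lim_{x_C\to\infty}\partial_a\partial_b\log P(x)=0$ is never established, and you yourself concede that the partition-lattice bookkeeping does not close because several partition types contribute at the same order. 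As written, this is a genuine gap, not a routine verification. The idea you are missing is \emph{homogeneity}: $d_{A,B}$ is nonnegative (it has the spectral representation $\int[\min(\max_A,\max_B)-\max_C]_+\,H(\D\omega)$) and homogeneous of degree $-1$, whereas each $W^N_M$ is a finite sum of products of the mixed partials $V^N_J$, each homogeneous of degree $-(|J|+1)$. Substituting $t^{-1}x$ into the identity above makes the left-hand side grow like $e^{t\,d_{A,B}(x)}$ while the right-hand side grows at most polynomially in $t$; hence $d_{A,B}\equiv 0$, and then $\chi_{A,B}=\lim_{x_C\to\infty}d_{A,B}=0$, which is exactly $V^{A\cup B}=V^A+V^B$, i.e.\ $X_A\ci X_B$. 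No asymptotic matching in $x_C$ is required.

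A secondary but real problem is regularity. Your argument needs $\partial_a\partial_b\log g$, hence $\partial_a P$ with $a$ already appearing in a block $J$ of the partition, i.e.\ derivatives of $V$ taken twice in the same coordinate. The hypothesis of a positive continuous density only guarantees the mixed partials $V^N_J$ (one derivative per distinct coordinate) exist and are continuous; second derivatives in a single coordinate are not available. The paper's route avoids this entirely by conditioning on $X_C=x_C$ through the ratio $G^{A\cup C}_C/G^C_C$, which uses only $\partial_C$, and by exploiting the scaling argument on the undifferentiated identity. If you want to salvage your approach, replace the differentiated identity by the exponential identity above and apply the homogeneity comparison; the limit $x_C\to\infty$ is then taken only of the nonnegative quantity $d_{A,B}$ via its spectral representation, where it is harmless.
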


A proof of this theorem is given in
Section~\ref{sec:proofs}. Beforehand, some comments are in order.

(a) First, the requirement of a positive continuous density
for $X$ is much less restrictive than requiring the spectral measure
$H$ of $X$ to admit such a density, \cf \cite{beirletal04}
pp.\,262-264 and references therein. For instance, fully independent
variables $X=(X_i)_{i \in I}$ have a discrete spectral measure, while
their density exists and is positive and continuous.  A more subtle
example is, for instance, the asymmetric logistic model \citep{tawn90}, which admits a continuous positive density and whose spectral measure carries mass
on all faces of $S_+$, \cf also Example~\ref{ex:examples}.

(b) Secondly, both random vectors
$(X_i)_{i=1,2,5}$ and $(X_i)_{i=1,4,5}$ that were considered in the
Gaussian case in Example~\ref{ex:gauss} have a positive continuous
density on $\RR^d$. Hence, there exists no version for
Theorem~\ref{thm:CImaxstabledensity} for the Gaussian case.

(c) Note that the implication of Theorem~\ref{thm:CImaxstabledensity} is the independence of $X_A$ and $X_B$, not the independence of all three subvectors $X_A,X_B,X_{I \setminus (A \cup B)}$.

(d) By means of the same argument that shows that pairwise
independence of the components of a max-stable random vector implies
already their joint independence, we may deduce a version of
Theorem~\ref{thm:CImaxstabledensity}, in which more than two
subvectors are considered.

\begin{corollary}\label{cor:CImaxstabledensity}
  Let $X=(X_i)_{i \in I}$ be a simple max-stable random vector with
  positive continuous density. Then, for any disjoint
  non-empty subsets $A_1,\dots,A_k$ of $I$, the conditional independence
  \mbox{$\ci_{i=1}^{k} X_{A_i} \mid X_{I \setminus \bigcup_{i = 1}^k A_i}$}
  implies the independence \mbox{$\ci_{i=1}^{k} X_{A_i}$}. 
\end{corollary}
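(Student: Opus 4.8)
The plan is to combine Theorem~\ref{thm:CImaxstabledensity} with the classical fact that, for max-stable laws, pairwise independence of disjoint blocks already forces their joint independence, as anticipated in comment~(d). Throughout I write $C = I \setminus \bigcup_{i=1}^k A_i$ for the conditioning index set, so the hypothesis reads $\ci_{i=1}^k X_{A_i} \mid X_C$, and for a fixed pair $i \ne j$ I set $R_{ij} = \bigcup_{l \ne i,j} A_l$.

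First I would reduce the mutual conditional independence to pairwise conditional independence in which the remaining blocks have been absorbed into the conditioning set. By the decomposition property of conditional independence, $\ci_{l=1}^k X_{A_l} \mid X_C$ implies $X_{A_i} \ci (X_{A_j}, X_{R_{ij}}) \mid X_C$ for each pair; the weak-union property then gives $X_{A_i} \ci X_{A_j} \mid (X_C, X_{R_{ij}})$. Since $C \cup R_{ij} = I \setminus (A_i \cup A_j)$, this is exactly $X_{A_i} \ci X_{A_j} \mid X_{I \setminus (A_i \cup A_j)}$. These are elementary manipulations of the conditional-independence calculus and need no extra regularity.

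Now, for each pair $i \ne j$, the sets $A_i, A_j$ are disjoint non-empty subsets of $I$ and $X$ has a positive continuous density, so Theorem~\ref{thm:CImaxstabledensity} promotes the conditional statement to the unconditional pairwise independence $X_{A_i} \ci X_{A_j}$. Running this over all $\binom{k}{2}$ pairs shows that $X_{A_1},\dots,X_{A_k}$ are pairwise independent.

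It remains to upgrade pairwise independence to joint independence, and this is the only place where max-stability is genuinely used; it is the argument referred to in comment~(d). Consider $Y = X_{\bigcup_i A_i}$, which is again simple max-stable, with exponent measure $\mu$ on $[0,\infty)^{\bigcup_i A_i} \setminus \{0\}$. Independence of two disjoint subvectors $Y_{A_i}$ and $Y_{A_j}$ is equivalent to $\mu$ assigning no mass to $\{w : w_{A_i} \ne 0 \text{ and } w_{A_j} \ne 0\}$. The region of configurations supported on two or more blocks is the finite union $\bigcup_{i<j} \{w : w_{A_i} \ne 0,\ w_{A_j} \ne 0\}$, so pairwise independence together with a union bound forces $\mu$ to concentrate on single-block configurations. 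Equivalently, the exponent function of $Y$ splits additively as $\sum_{i=1}^k V_i(y_{A_i})$, which is precisely the joint independence $\ci_{i=1}^k X_{A_i}$, completing the proof. I expect this last step to be the crux: everything before it is a routine reduction using the conditional-independence calculus and Theorem~\ref{thm:CImaxstabledensity}, whereas the passage from pairwise to joint independence is exactly where the subadditivity of the max-stable exponent measure does the essential work.
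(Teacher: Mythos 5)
Your proof is correct, and its skeleton matches the paper's: reduce the mutual conditional independence to pairwise statements, invoke Theorem~\ref{thm:CImaxstabledensity} to get pairwise (unconditional) independence, then upgrade to joint independence using max-stability. Two points of genuine difference are worth noting. First, your use of decomposition plus weak union to condition on all of $I\setminus(A_i\cup A_j)$ rather than only on $C=I\setminus\bigcup_l A_l$ is actually a touch more careful than the paper, whose proof states $X_{A_{i_1}}\ci X_{A_{i_2}}\mid X_C$ and then cites Theorem~\ref{thm:CImaxstabledensity}, whose hypothesis literally requires conditioning on the full complement $I\setminus(A_{i_1}\cup A_{i_2})$; your weak-union step supplies exactly the form the theorem needs. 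Second, for the pairwise-to-joint upgrade the paper invokes its Lemma~\ref{lemma:pwindep}, which is proved by forming the max-linear functionals $Y_i=\max_{j_i\in A_i}u_{j_i}X_{j_i}$ (so that $(Y_1,\dots,Y_k)$ is again simple max-stable by \cite{dehaan78}) and then appealing to Berman's theorem that pairwise independence of the coordinates of a multivariate extreme-value law implies joint independence. You instead argue directly on the exponent measure: pairwise independence of blocks is equivalent to $\mu$ putting no mass on $\{w: w_{A_i}\neq 0,\ w_{A_j}\neq 0\}$, a finite union bound over pairs forces $\mu$ to live on single-block configurations, and the exponent function then splits additively. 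Your route is self-contained and arguably more transparent about \emph{why} max-stability rescues pairwise independence (subadditivity of the exponent measure over a null union), at the cost of having to justify the standard equivalence between block independence and the support condition on $\mu$; the paper's route outsources that work to the classical references. Both are sound.
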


(e) The non-degenerate univariate max-stable laws are classified up to
location and scale by the one parameter family of extreme value
distributions indexed by $\gamma \in \RR$
\begin{align*}
  F_\gamma(x)=\exp(-(1+\gamma x)^{-1/\gamma}), \qquad x \in \left\{
    \begin{array}{ll}
      (-1/\gamma, \infty) & \gamma > 0,\\
      \RR & \gamma = 0,\\
      (-\infty, -1/\gamma) & \gamma <0.
    \end{array} \right.
\end{align*}
Any other (not necessarily simple) max-stable random vector is
obtained through a transformation of the marginals that is
differentiable and strictly monotone on the respective sub-domain on
$\RR^d$ (cf.\ e.g. \cite{res08} Prop. 5.10).
Hence, the above results remain valid for the general class of
max-stable random vectors.

(f) \cite{domeyi14} show that, up to time reversal, only
max-auto\-regressive processes of order one can appear as discrete
time stationary max-stable processes that satisfy the first order Markov
property. This result indicates already that the conditional
independence assumption is to some extent unnatural in presence of the
max-stability property.\\

\begin{example}\label{ex:examples} 
Various classes of tractable max-stable distributions admit a positive continuous density, such that Theorem~\ref{thm:CImaxstabledensity} and Corollary~\ref{cor:CImaxstabledensity} apply.
Popular models that are commonly used for statistical inference include the asymmetric logistic model \citep{tawn90}, the asymmetric Dirichlet model \citep{ct91}, the pairwise beta model \citep{cooletal10} and its generalizations involving continuous spectral densities \citep{ballschl11} in the multivariate case. Moreover, most marginal distributions of spatial models such as the Gaussian max-stable model \citep{smit90b,gentetal11} or the Brown-Resnick model \citep{kabletal09,hr89} possess a positive continuous density if the parameters are non-degenerate.
Hence, if any of the components of the previously mentioned extreme value models exhibit conditional independence given any of the remaining components, they must be independent.
\end{example}

In the remaining article we subsume auxiliary arguments in
Section~\ref{sec:exponentmeasures} and give all proofs in
Section~\ref{sec:proofs}.

\section{Preparatory results on max-stable random
  vectors} \label{sec:exponentmeasures}

Throughout this section let $G$ be the distribution function of a
simple max-stable random vector $X=(X_i)_{i \in I}$ that has a
positive continuous density. We denote its
exponent function by
\begin{align*}
  V(x):= -\log G(x)= \int_{S_+} \max_{i \in I}
  \left(\frac{\omega_i}{x_i}\right)\, H(\D \omega), \qquad x \in
  (0,\infty)^I.
\end{align*}
Lower order marginals $G^A$ that refer to a subset $A$ of $I$ are
obtained as $\min_{i \in A^c}(x_i) \to \infty$, where $A^c=I\setminus A$.
We write $x_{A^c} \to \infty$ for $\min_{i \in A^c}(x_i) \to \infty$, 
and with this notation
\begin{align*}
  G^A(x_A):= \lim_{x_{A^c} \to \infty} G(x)
  \quad \text{and} \quad
  V^A(x_A):= -\log G^A(x_A).
\end{align*}
Since $G$ is absolutely continuous, the partial derivatives
\begin{align*}
  G^A_B(x_A):= \frac{\partial^{|B|}}{\partial x_{B}} G^A(x_A)
\quad \text{and} \quad
  V^A_B(x_A):= \frac{\partial^{|B|}}{\partial x_{B}} V^A(x_A)
\end{align*}
exist and are continuous for $B \subset A$, and the latter $V^A_B$ are homogeneous of order $-(|B|+1)$ \citep{ct91}. 
An elementary computation shows that
\begin{align*}
  G^{A}_B(x_{A}) &= {W^{A}_B(x_{A})} \exp\left(-V^{A}(x_{A})\right),  
\end{align*}
where
\begin{align*}
  W^N_M(x_M)= \sum_{\pi \in \Pi(M)} (-1)^{|\pi|} \prod_{J \in \pi}
  V^N_J(x_N), 
\end{align*}
and $\Pi(M)$ stands for the set of partitions of $M$
for $M \subset N \subset I$.

Let us further denote the set of non-empty subsets of $I$ by
$\subs(I)$. The collection of exponent functions $(V^A)_{A \in
  \subs(I)}$ is in a one-to-one correspondence with its
M\"obius inversion $(d_A)_{A \in \subs(I)}$, i.e., if we set
\begin{align*}
  d_A(x):= \sum_{B \in \subs(I) : A^c \subset B} (-1)^{|B \cap
    A|+1}V^B(x_B),
\end{align*}
it follows that $V^A$ can be recovered via
\begin{align}\label{eqn:moeb}
  V^A(x_A)=\sum_{B \in \subs(I) : B \cap A \neq \emptyset} d_B(x)
\end{align}
(cf.\ \cite{paptawn14}, Theorem~2 and \cite{schlathertawn_02}, Theorem~4 or, more generally, \cite{berge71} Chapter~3, Section~2 for the M\"obius inversion).
Finally, we define
\begin{align*}
  \chi_A(x_A)& := \lim_{x_{A^c} \to \infty}
  d_A(x)
  =\sum_{B \in \subs(I) : B \subset A} (-1)^{|B|+1}V^B(x_B) =\sum_{B
    \in \subs(I) : A \subset B} d_B(x).
\end{align*}
Then the collection of functions $(\chi_A)_{A \in \subs(I)}$ is also in a one-to-one
correspondence with $(V^A)_{A \in \subs(I)}$ as well as $(d_A)_{A \in
  \subs(I)}$ and the inversions are given by
\begin{align*}
  d_A(x)&= \sum_{B \in \subs(I): A \subset B} (-1)^{|B \setminus A|} \chi_{B}(x_B),\\
  V^A(x_A)&= \sum_{B \in \subs(I) : B \subset A} (-1)^{|B|+1}
  \chi_B(x_B).
\end{align*}
Further expressions for $V^A$, $d_A$ and $\chi_A$ are collected in Lemma~\ref{lemma:d_spectralmeasure}. Note that $\chi_A (x_A) \geq d_A(x)$ and thus,
\begin{align}\label{eqn:zeroequivalence}
  d_A = 0 \quad \Leftrightarrow \quad \chi_A = 0.
\end{align}

\begin{lemma}\label{lemma:d_spectralmeasure}
  The functions $V^A$ and $d_A$ and $\chi_A$ (with $A \in \subs(I)$)
  can be expressed in terms of the spectral measure $H$ as follows:
  \begin{align*}
    V^A(x_A)& =\int_{S_+} \max_{i \in A} \left(\frac{\omega_i}{x_i}\right)\, H(\D \omega),\\
    d_A(x) &= \int_{S^+} \left[ \min_{i \in A}
      \left(\frac{\omega_i}{x_i}\right) - \max_{j \in A^c}
      \left(\frac{\omega_j}{x_j}\right) \right]_+
    \, H(\D \omega),\\
    \chi_A(x_A) &= \int_{S^+} \min_{i \in A}
    \left(\frac{\omega_i}{x_i}\right) \, H(\D \omega).
  \end{align*}
  Here $z_+=\max(0,z)$ and $\max(\emptyset)=0$.
\end{lemma}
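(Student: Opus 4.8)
The plan is to establish the three representations in sequence, starting from the defining integral for $V$ and propagating through the M\"obius-type relations recorded immediately before the lemma; throughout we abbreviate $a_i:=\omega_i/x_i$ and use that $S_+$ is compact, so that $H$ is a finite measure and every integral below is finite. The formula for $V^A$ comes essentially for free: as $\min_{j\in A^c}x_j\to\infty$, each ratio $a_j$ with $j\in A^c$ tends to $0$ (uniformly in $\omega$, since $\omega_j$ is bounded on the compact $S_+$), so the integrand $\max_{i\in I}a_i$ decreases to $\max_{i\in A}a_i$; dominated convergence---with a fixed integrable majorant available once the coordinates $x_j$, $j\in A^c$, are bounded below---then yields $V^A(x_A)=\int_{S_+}\max_{i\in A}a_i\,H(\D\omega)$.

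For $\chi_A$ I would substitute this representation into the defining sum $\chi_A(x_A)=\sum_{B\in\subs(I):\,B\subset A}(-1)^{|B|+1}V^B(x_B)$. Since the sum is finite and each integral is finite, sum and integral may be exchanged, and the claim reduces to the pointwise identity
\[
  \min_{i\in A}a_i=\sum_{\emptyset\neq B\subset A}(-1)^{|B|+1}\max_{i\in B}a_i .
\]
This is the standard max--min dual of the inclusion--exclusion principle and is proved by induction on $|A|$. Integrating against $H$ gives $\chi_A(x_A)=\int_{S_+}\min_{i\in A}a_i\,H(\D\omega)$.

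The representation for $d_A$ is the substantive step. Rather than work from the definition $d_A=\sum_{B:\,A^c\subset B}(-1)^{|B\cap A|+1}V^B$ directly, I would start from the inversion $d_A(x)=\sum_{B\in\subs(I):\,A\subset B}(-1)^{|B\setminus A|}\chi_B(x_B)$ recorded before the lemma, insert the representation of $\chi_B$ just obtained, and again exchange the finite sum with the integral. Everything then rests on the pointwise identity
\[
  \sum_{B:\,A\subset B\subset I}(-1)^{|B\setminus A|}\min_{i\in B}a_i=\Big[\min_{i\in A}a_i-\max_{j\in A^c}a_j\Big]_+ ,
\]
with the convention $\max(\emptyset)=0$. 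I would prove this by writing $B=A\cup C$ with $C\subset A^c$, setting $m:=\min_{i\in A}a_i$, and splitting $A^c$ into the block of indices $j$ with $a_j\geq m$ and the block with $a_j<m$. The alternating sum over subsets of the first block vanishes unless that block is empty, which forces $m>\max_{j\in A^c}a_j$; in that regime the surviving alternating sum collapses, via the companion identity $\max_{j\in A^c}a_j=\sum_{\emptyset\neq C\subset A^c}(-1)^{|C|+1}\min_{j\in C}a_j$, to $m-\max_{j\in A^c}a_j$, whereas in the complementary regime (first block non-empty, so $m\leq\max_{j\in A^c}a_j$) it is $0$; these two cases are exactly matched by the positive part on the right. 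The degenerate case $A=I$, where $A^c=\emptyset$, is covered by the convention $\max(\emptyset)=0$ together with $m\geq 0$.

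The main obstacle is precisely this last pointwise identity. Unlike the clean telescoping behind $V^A$ and $\chi_A$, the alternating sum defining $d_A$ mixes a minimum over $A$ with a maximum over $A^c$ and produces a positive part, so the bookkeeping of which terms cancel must be organised with care; the split of $A^c$ at the threshold $m$ is what renders the cancellations transparent. Once the identity is in hand, integrating against $H$ completes the proof, and the inequality $\chi_A\geq d_A$ underlying \eqref{eqn:zeroequivalence} is then visible directly from the representations, since $\big[\min_{i\in A}a_i-\max_{j\in A^c}a_j\big]_+\leq\min_{i\in A}a_i$ pointwise.
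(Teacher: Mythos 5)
Your proposal is correct, but it reaches the formula for $d_A$ by a different decomposition than the paper. The paper works directly from the primary definition $d_A(x)=\sum_{B \in \subs(I):A^c\subset B}(-1)^{|B\cap A|+1}V^B(x_B)$: it substitutes the integral representation of $V^B$, and proves the resulting pointwise identity by the substitution $b=\max_{i\in A^c}a_i$, $c_i=\max(a_i,b)$, which reduces everything to the max--min inclusion--exclusion over subsets of $A$ together with $\min_{i\in A}\max(a_i,b)=\max(\min_{i\in A}a_i,b)$; the expression for $\chi_A$ is then read off as the limit $x_{A^c}\to\infty$. You instead go $V^A\to\chi_A\to d_A$: you first prove the $\chi_A$ formula from $\sum_{B\subset A}(-1)^{|B|+1}V^B$, then invoke the M\"obius inversion $d_A=\sum_{B\supset A}(-1)^{|B\setminus A|}\chi_B$ stated before the lemma, which leads to the companion identity $\sum_{B:A\subset B}(-1)^{|B\setminus A|}\min_{i\in B}a_i=[\min_{i\in A}a_i-\max_{j\in A^c}a_j]_+$; your threshold-splitting proof of that identity (factoring the alternating sum over the indices $j\in A^c$ with $a_j\geq m$, which kills the sum unless that block is empty) is sound, including the boundary case $\max_{j\in A^c}a_j=m$ and the degenerate case $A=I$. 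The trade-off is minor: the paper's route is slightly more self-contained, since it only uses the defining formula for $d_A$ in terms of the $V^B$ and a single clean substitution, whereas yours leans on the inversion formula relating $d_A$ to the $\chi_B$ (which the paper states without proof as standard M\"obius inversion); in exchange, your order of derivation makes the inequality $\chi_A\geq d_A$ and the equivalence \eqref{eqn:zeroequivalence} transparent at the end, and your treatment of $V^A$ via dominated convergence is more explicit than the paper's ``clear from the definition.''
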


It turns out that the following two quantities are closely linked to
conditional independence and independence of subvectors of $X$,
respectively. For non-empty disjoint subsets $A,B$ of $I$ and
$C=I\setminus(A \cup B)$, we set for $x \in (0,\infty)^I$
\begin{align*}
  d_{A,B}(x) &:= V^{A \cup C}(x_{A \cup C}) + V^{B \cup C}(x_{B \cup C}) - V(x)-V^C(x_C)\\
  &\,= \sum_{L \in \subs(I): L \cap A \neq \emptyset, L \cap B \neq \emptyset, L \cap C = \emptyset} d_L(x),\\
  \chi_{A,B}(x_{A \cup B}) &:= \lim_{x_C \to \infty} d_{A,B}(x) = V^{A}(x_{A}) + V^{B}(x_{B}) - V^{A\cup B}(x_{A \cup B}) \\
  &\,= \sum_{L \in \subs(I): L \cap A \neq \emptyset, L \cap B \neq \emptyset} d_L(x),
\end{align*}
where the last equalities follow from \eqref{eqn:moeb}.
The following lemma is an analogue of Lemma~\ref{lemma:d_spectralmeasure}.

\begin{lemma}\label{lemma:d_extended_spectralmeasure}
  The functions $d_{A,B}$ and $\chi_{A,B}$ can be expressed in terms
  of the spectral measure $H$ as follows
  \begin{align*}
    d_{A,B}(x) &= \int_{S^+} \left[ \min \left(\max_{i \in A}
        \left(\frac{\omega_i}{x_i}\right), \max_{i \in B}
        \left(\frac{\omega_i}{x_i}\right) \right) - \max_{j \in C}
      \left(\frac{\omega_j}{x_j}\right) \right]_+
    \, H(\D \omega),\\
    \chi_{A,B}(x_{A \cup B}) &= \int_{S^+} \min \left( \max_{i \in A}
      \left(\frac{\omega_i}{x_i}\right), \max_{i \in B}
      \left(\frac{\omega_i}{x_i}\right) \right) \, H(\D \omega).
  \end{align*}
\end{lemma}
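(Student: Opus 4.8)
The plan is to reduce both identities to elementary pointwise relations between the integrands, using the spectral representation $V^A(x_A)=\int_{S_+}\max_{i\in A}(\omega_i/x_i)\,H(\D\omega)$ supplied by Lemma~\ref{lemma:d_spectralmeasure}. The only analytic point to dispose of at the outset is that each exponent function is finite: since $V^A(x_A)\le\sum_{i\in A}\int_{S_+}(\omega_i/x_i)\,H(\D\omega)=\sum_{i\in A}1/x_i<\infty$ by the moment conditions on $H$, every integral appearing in the definitions of $d_{A,B}$ and $\chi_{A,B}$ converges, and the finite linear combinations may therefore be merged into a single integral over $S_+$. After that the problem is purely algebraic on the integrand.

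For $\chi_{A,B}$ I would abbreviate $a:=\max_{i\in A}(\omega_i/x_i)$ and $b:=\max_{i\in B}(\omega_i/x_i)$, so that $\max_{i\in A\cup B}(\omega_i/x_i)=\max(a,b)$. Substituting the three representations into $\chi_{A,B}=V^A+V^B-V^{A\cup B}$ leaves the integrand $a+b-\max(a,b)=\min(a,b)$, which is exactly the claimed form $\min(\max_{i\in A}(\omega_i/x_i),\,\max_{i\in B}(\omega_i/x_i))$.

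For $d_{A,B}$ I would additionally set $c:=\max_{j\in C}(\omega_j/x_j)$ and use $\max_{i\in A\cup C}(\omega_i/x_i)=\max(a,c)$, $\max_{i\in B\cup C}(\omega_i/x_i)=\max(b,c)$, together with $V(x)=V^I$ whose integrand is $\max(a,b,c)$. The definition $d_{A,B}=V^{A\cup C}+V^{B\cup C}-V-V^C$ then reduces to verifying the pointwise identity
\[
\max(a,c)+\max(b,c)-\max(a,b,c)-c=\bigl[\min(a,b)-c\bigr]_+.
\]
This is the one step with genuine content, and I expect to settle it by a short case distinction according to whether $c\ge\min(a,b)$ or $c<\min(a,b)$: in the first case both sides vanish (one checks the two sub-cases $c\ge\max(a,b)$ and $\min(a,b)\le c<\max(a,b)$ separately), while in the second case the left-hand side collapses to $\min(a,b)-c>0$, matching the positive part on the right.

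The main obstacle is therefore nothing more than keeping the case analysis for the $d_{A,B}$-integrand clean; once that identity is in hand, the $\chi_{A,B}$-formula can alternatively be recovered from it by letting $x_C\to\infty$, whence $c\downarrow 0$ and $\bigl[\min(a,b)-c\bigr]_+\uparrow\min(a,b)$, in accordance with the definition $\chi_{A,B}=\lim_{x_C\to\infty}d_{A,B}$. I would justify this passage to the limit by monotone convergence, the integrand increasing to $\min(a,b)$ as $c\downarrow 0$ since $a,b\ge 0$.
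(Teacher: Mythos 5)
Your proposal is correct and follows essentially the same route as the paper: substitute the spectral representation of each exponent function, combine the (finite) integrals, and verify the pointwise identity $\max(a,c)+\max(b,c)-\max(a,b,c)-c=[\min(a,b)-c]_+$ on the integrand. The paper verifies that identity via the chain $u+v-\max(u,v)=\min(u,v)$ and $\min(\max(a,c),\max(b,c))=\max(\min(a,b),c)$ rather than your case analysis, but this is a cosmetic difference and your version (including the monotone-convergence passage for $\chi_{A,B}$) is sound.
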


Note that $\chi_{A,B}(x_{A \cup B}) \geq d_{A,B}(x)$ implies similarly
to (\ref{eqn:zeroequivalence}) that
\begin{align}\label{eqn:extzeroequivalence}
  d_{A,B} = 0 \quad \Leftrightarrow \quad \chi_{A,B} = 0.
\end{align}

{General expressions for the regular conditional distributions for the
  distribution of a max-stable process conditioned on a finite number
  of sites that are based on hitting scenarios of Poisson point
  process representations have been computed in
  \cite{domeyi13,oestschl14,oest15} under mild regularity assumptions or in
  \cite{wansto11} for spectrally discrete max-stable random vectors.}

Let again $A$ and $B$ be non-empty disjoint subsets of $I$.
Since we assumed a positive continuous density for $G$ (and hence also
for its marginals), the numerators and denominators in
\begin{align*}
  G(x_A|x_B)
:= \frac{G^{A \cup B}_B(x_{A \cup B})}{G^B_B(x_B)} = \exp\left(-\left[V^{A \cup B}(x_{A \cup B})-V^B(x_B) \right]
  \right) \frac{W^{A \cup B}_B(x_{A \cup B})}{W^B_B(x_B)}
\end{align*}
are non-zero and continuous for $x \in (0,\infty)^I$ and the expression $G(x_A | x_B)$ constitutes a regular version of the conditional probability $\PP(X_A \leq x_A | X_B = x_B)$.

\begin{proposition}\label{prop:independence}
The functions $\chi_{A,B}$ and $d_{A,B}$ are connected with the independence and conditional independence of the respective subvectors of $X$ as follows.
\begin{enumerate}[a)]
\item  $X_A \ci X_B \mid  X_{I\setminus (A \cup B)}\quad \Rightarrow \quad d_{A,B}=0$.
\item  $X_A \ci X_B \quad \Leftrightarrow \quad \chi_{A,B}=0$.
\end{enumerate}
\end{proposition}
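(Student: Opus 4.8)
The plan is to handle the two parts separately, with part (b) being an elementary computation and part (a) resting on a scaling argument that I expect to be the crux. For part (b), I would observe that $X_A \ci X_B$ is equivalent to the factorization of the joint distribution function of the pair, $G^{A\cup B}(x_{A\cup B}) = G^A(x_A)\,G^B(x_B)$ for all $x_{A\cup B}$. Passing to exponent functions via $G^N = \exp(-V^N)$, this is the same as $V^{A\cup B}(x_{A\cup B}) = V^A(x_A) + V^B(x_B)$, i.e. as $\chi_{A,B}(x_{A\cup B}) = V^A + V^B - V^{A\cup B} \equiv 0$. Since the exponential is injective and factorization of the joint distribution function characterizes independence of the pair $(X_A,X_B)$, both implications follow at once.

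For part (a), with $C = I\setminus(A\cup B)$, the first step is to rewrite the conditional independence $X_A \ci X_B \mid X_C$ as the factorization of regular conditional distribution functions, $G(x_{A\cup B}\mid x_C) = G(x_A\mid x_C)\,G(x_B\mid x_C)$. Because $G$ has a positive continuous density, every conditional distribution function appearing here is continuous and strictly positive, so the almost-sure factorization guaranteed by conditional independence in fact holds for every $x\in(0,\infty)^I$. I would then substitute the explicit expression $G(x_S\mid x_C) = (W^{S\cup C}_C / W^C_C)\exp(-(V^{S\cup C}-V^C))$ for $S \in \{A\cup B,\,A,\,B\}$, multiply out, and collect the exponential and the $W$-factors. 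Recalling $d_{A,B}=V^{A\cup C}+V^{B\cup C}-V-V^C$, this leaves the single identity
\[ \frac{W^I_C(x)\,W^C_C(x_C)}{W^{A\cup C}_C(x_{A\cup C})\,W^{B\cup C}_C(x_{B\cup C})} = \exp\!\left(-d_{A,B}(x)\right), \qquad x \in (0,\infty)^I. \]

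The key step, which I expect to be the main obstacle, is to deduce $d_{A,B}\equiv 0$ from this identity, and here I would exploit homogeneity. Fix $x$ and replace $x$ by $tx$ for $t>0$. Since $V^N_J$ is homogeneous of order $-(|J|+1)$, each factor $W^N_C(tx) = \sum_{\pi\in\Pi(C)}(-1)^{|\pi|}\prod_{J\in\pi}V^N_J(tx)$ is a finite real linear combination of the powers $t^{-(|C|+k)}$, $k=1,\dots,|C|$; hence the left-hand side is a \emph{rational} function of $t$, while the right-hand side equals $\exp(-d_{A,B}(x)/t)$ because $d_{A,B}$ is homogeneous of order $-1$. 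The denominators remain strictly positive (they are the positive subdensities $G^{A\cup C}_C,G^{B\cup C}_C$ times $\exp(V^{A\cup C}),\exp(V^{B\cup C})$), so the ratio is a genuine nonzero rational function of $t$. A nonzero rational function behaves like $\kappa\,t^{m}$ with $\kappa\neq 0$ as $t\downarrow 0$ and therefore cannot vanish to infinite order, whereas $\exp(-d_{A,B}(x)/t)$ does vanish to infinite order as $t\downarrow 0$ whenever $d_{A,B}(x)>0$. Since $d_{A,B}\geq 0$ by Lemma~\ref{lemma:d_extended_spectralmeasure}, comparing the two sides as $t\downarrow 0$ forces $d_{A,B}(x)=0$; as $x$ was arbitrary, $d_{A,B}\equiv 0$, which is the claim.
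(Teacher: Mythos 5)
Your proposal is correct and follows essentially the same route as the paper: part (b) by matching $\chi_{A,B}=0$ with the factorization $G^{A\cup B}=G^A G^B$, and part (a) by upgrading the a.s.\ factorization of conditional distribution functions to a pointwise identity via positivity and continuity, reducing it to $\exp(-d_{A,B})$ equal to a ratio of $W$-terms, and then using the homogeneity mismatch (order $-1$ versus orders $-(|J|+1)$) under the scaling $x\mapsto tx$ to force $d_{A,B}\equiv 0$. The only cosmetic difference is that you let $t\downarrow 0$ and compare a rational function of $t$ with a function vanishing to infinite order, whereas the paper sends $t\to\infty$ and compares polynomial with exponential growth; these are equivalent.
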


\begin{remark}
The assumption that $G$ admits a positive continuous density on $(0,\infty)^I$ is crucial for part a) to hold true. It fails in Example~\ref{ex:max}.
\end{remark}

Moreover, it is a simple consequence of \cite{ber61} and
\cite{dehaan78} that the pairwise independence of any disjoint
subvectors of the simple max-stable random vector $X$ implies already
their joint independence.

\begin{lemma}\label{lemma:pwindep}
  If $X_{A_1},\dots,X_{A_k}$ are pairwise independent subvectors of a
  simple max-stable random vector $X$ (for necessarily disjoint $A_i
  \subset I$), then they are jointly independent.
\end{lemma}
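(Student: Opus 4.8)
The plan is to translate pairwise independence into a support statement for the spectral measure $H$ and then read off joint independence from the resulting additive decomposition of the exponent function. First I would invoke Proposition~\ref{prop:independence}(b): for each pair $i \neq j$ the independence $X_{A_i} \ci X_{A_j}$ is equivalent to $\chi_{A_i,A_j} \equiv 0$, which by Lemma~\ref{lemma:d_extended_spectralmeasure} reads
\[
  \int_{S_+} \min\left(\max_{s \in A_i}\frac{\omega_s}{x_s},\ \max_{t \in A_j}\frac{\omega_t}{x_t}\right) H(\D\omega) = 0, \qquad x \in (0,\infty)^I.
\]
Since the integrand is nonnegative it must vanish $H$-almost everywhere. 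Introducing, for each block $A_\ell$, the \emph{$x$-independent} charging event $E_\ell := \{\omega \in S_+ : \omega_s > 0 \text{ for some } s \in A_\ell\}$, evaluation at a single point such as $x=\mathbf{1}$ shows that this vanishing is equivalent to $H(E_i \cap E_j)=0$. Thus pairwise independence yields $H(E_i \cap E_j)=0$ for every pair $i \neq j$.

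Next I would combine these finitely many null sets. Because there are only $\binom{k}{2}$ pairs, $H\left(\bigcup_{i<j}(E_i \cap E_j)\right)=0$, so for $H$-almost every $\omega$ at most one block $A_\ell$ is charged. On the complement of this null set, and for every $x \in (0,\infty)^I$, at most one of the numbers $\max_{s \in A_\ell}\omega_s/x_s$ is strictly positive, whence
\[
  \max_{s \in \bigcup_\ell A_\ell}\frac{\omega_s}{x_s} = \sum_{\ell=1}^k \max_{s \in A_\ell}\frac{\omega_s}{x_s}.
\]
Integrating this identity against $H$ and using the representation of $V^A$ from Lemma~\ref{lemma:d_spectralmeasure} gives $V^{\bigcup_\ell A_\ell}(x) = \sum_{\ell=1}^k V^{A_\ell}(x_{A_\ell})$ for all $x$, and exponentiating produces the factorization $G^{\bigcup_\ell A_\ell}(x) = \prod_{\ell=1}^k G^{A_\ell}(x_{A_\ell})$ of the joint marginal distribution of the subvectors, i.e.\ their joint independence $\ci_{\ell=1}^k X_{A_\ell}$.

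The one step requiring care — and the main obstacle — is the passage from ``the integrand vanishes $H$-a.e.\ for every $x$'' to ``$H$-a.e.\ $\omega$ charges at most one block.'' This is exactly where one exploits that the charging events $E_\ell$ do not depend on $x$, so that a single evaluation point controls the entire family and no uncountable union of exceptional null sets is incurred. This $x$-free support decomposition is precisely the mechanism by which pairwise non-interaction upgrades to joint non-interaction for max-stable laws, in accordance with the classical results of \cite{ber61} and \cite{dehaan78}.
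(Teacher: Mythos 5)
Your proof is correct, but it takes a genuinely different route from the paper's. The paper reduces the multivariate statement to a scalar one: with $r_i=\sum_{j\in A_i}x_j^{-1}$ and $u_{j}=(r_i x_{j})^{-1}$, the max-linear functionals $Y_i=\max_{j\in A_i}u_{j}X_{j}$ form a simple max-stable random vector by \cite{dehaan78}, inherit the pairwise independence, and are therefore jointly independent by Theorem~2 of \cite{ber61}; since $\{Y_i\leq r_i^{-1}\}=\{X_{A_i}\leq x_{A_i}\}$, the joint distribution function factorizes at the arbitrary point $x$. You instead argue directly on the spectral measure: Proposition~\ref{prop:independence}(b) together with Lemma~\ref{lemma:d_extended_spectralmeasure} turns each pairwise independence into $H(E_i\cap E_j)=0$ for the $x$-free charging events $E_\ell$, the finitely many null sets combine, and the resulting $H$-a.e.\ additivity $\max_{s\in\bigcup_\ell A_\ell}\omega_s/x_s=\sum_{\ell}\max_{s\in A_\ell}\omega_s/x_s$ yields $V^{\bigcup_\ell A_\ell}=\sum_\ell V^{A_\ell}$ and hence the factorization of the joint marginal of the subvectors. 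Your pivotal observation --- that a single evaluation point such as $x=\mathbf{1}$ controls the whole family because the sets $E_\ell$ do not depend on $x$ --- is sound, as is the passage from factorization of the distribution function on $(0,\infty)^{\bigcup_\ell A_\ell}$ to independence (the Fr\'echet marginals put no mass on $(-\infty,0]$). What the paper's route buys is brevity by delegating the combinatorial heart of the matter to classical results; what yours buys is a self-contained argument that makes the geometric content of pairwise independence explicit, namely that $H$ is concentrated on the union of the faces of $S_+$ on which at most one block is charged --- in effect a re-derivation of the Berman--de~Haan mechanism at the level of the exponent measure.
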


\section{Proofs} \label{sec:proofs}

\begin{proof}[Proof of Lemma~\ref{lemma:d_spectralmeasure}]
The first equation is clear from the definition of $V^A$.
The relation for $d_A$ can be obtained as follows.
\begin{align*}
d_A(x)
&= \sum_{B \in \subs(I) : A^c \subset B} (-1)^{|B \cap A|+1}V^B(x_B)\\
&= \int_{S_+} \sum_{B \in \subs(I) : A^c \subset B} (-1)^{\lvert B \cap A\rvert +1}  \max_{i \in B} \left(\frac{\omega_i}{x_i}\right)\, H(\D \omega)\\
&= \int_{S^+} 
\left[
\min_{i \in A} \left(\frac{\omega_i}{x_i}\right)
- \max_{j \in A^c} \left(\frac{\omega_j}{x_j}\right)
\right]_+
\, H(\D \omega).
\end{align*}
In order to obtain the last equality, we denote $a_i=\omega_i/x_i$ and
distinguish two cases:\\
1st case: $A=I$. Then
\begin{align*}
\sum_{B \in \subs(I) : A^c \subset B} (-1)^{\lvert B \cap A\rvert +1}  \max_{i \in B} \left(a_i\right)
= \sum_{B \in \subs(I)} (-1)^{\lvert B\rvert +1}  \max_{i \in B} \left(a_i\right) = \min_{i \in I} \left(a_i\right).
\end{align*}
2nd case: $A \neq I$. Then set $b:=\max_{i \in A^c} a_i$ and $c_i:=\max(a_i,b)$, such that 
\begin{align*}
&\sum_{B \in \subs(I) : A^c \subset B} (-1)^{\lvert B \cap A\rvert +1}  \max_{i \in B} \left(a_i\right)
= \sum_{B \in \subs(I) : A^c \subset B, B \neq A^c} (-1)^{\lvert B \cap A\rvert +1}  \max_{i \in B \cap A} \left(c_i\right) - b\\
&=  \sum_{U \subset A : U\neq \emptyset} (-1)^{\lvert U \rvert +1}  \max_{i \in U} \left(c_i\right) - b
= \min_{i \in A} \left(c_i\right) - b\\
&= \min_{i \in A} \left(\max(a_i,b)\right) - b
=  \max\left(\min_{i \in A}(a_i),b\right) - b
= \left(\min_{i \in A}(a_i) - b\right)_+ .
\end{align*}
The expression for $\chi_A$ follows immediately.
\end{proof}

\begin{proof}[Proof of Lemma~\ref{lemma:d_extended_spectralmeasure}]
Similar to the proof of Lemma~\ref{lemma:d_spectralmeasure}, 
the relation for $d_{A,B}$ follows from  
\begin{align*}
d_{A,B}(x) &= V^{A \cup C}(x_{A \cup C}) + V^{B \cup C}(x_{B \cup C}) - V(x)-V^C(x_C)\\
&= \int_{S_+} 
\max_{i \in A\cup C} \left(\frac{\omega_i}{x_i}\right)   
+ \max_{i \in B\cup C} \left(\frac{\omega_i}{x_i}\right)   
- \max_{i \in I} \left(\frac{\omega_i}{x_i}\right)   
- \max_{i \in C} \left(\frac{\omega_i}{x_i}\right)   
\, H(\D \omega)\\
&= \int_{S^+} \left[ \min \left(\max_{i \in A}
  \left(\frac{\omega_i}{x_i}\right), \max_{i \in B}
  \left(\frac{\omega_i}{x_i}\right) \right) - \max_{j \in C}
  \left(\frac{\omega_j}{x_j}\right) \right]_+
\, H(\D \omega),
\end{align*}
where the last equality is obtained from 
\begin{align*}
&\max_{i \in A\cup C} \left(a_i\right)   
+ \max_{i \in B\cup C} \left(a_i\right)   
- \max_{i \in I} \left(a_i\right)   
- \max_{i \in C} \left(a_i\right)   \\
&=
\min\left(\max_{i \in A\cup C} \left(a_i\right),
\max_{i \in B\cup C} \left(a_i\right)\right)
- \max_{i \in C} \left(a_i\right)   \\
&= \max\left [\min\left(\max_{i \in A} \left(a_i\right),
\max_{i \in B} \left(a_i\right)\right) , \max_{i \in C}\left(a_i\right)\right]
- \max_{i \in C} \left(a_i\right)   \\
&= \left [\min\left(\max_{i \in A} \left(a_i\right),
\max_{i \in B} \left(a_i\right)\right) - \max_{i \in C}\left(a_i\right)\right]_+
\end{align*}
if we denote $a_i=\omega_i/x_i$.
The expression for $\chi_{A,B}$ follows immediately.
\end{proof}

\begin{proof} [Proof of Proposition~\ref{prop:independence}]
\begin{enumerate}[a)]
\item  As before, let $C=I\setminus (A \cup B)$. Since $G(x)=\exp(-V(x))$ has a positive continuous density, we have that  the conditional independence 
\mbox{$X_A \ci X_B \mid X_C$} for $C=I \setminus (A \cup B)$ implies that for all $x \in (0,\infty)^I$
\begin{align*}
G(x_A | X_C) \, G(x_B | X_C) = G(x_{A \cup B} | X_C) \qquad \PP\text{-a.s.} \,.
\end{align*}
Since $X_C$ has a positive continuous density with respect to the Lebesgue-measure on $(0,\infty)^C$, it follows that
\begin{align*}
G(x_A | x_C) \, G(x_B | x_C) = G(x_{A \cup B} | x_C) \qquad  \text{for all } x \in Q,
\end{align*}
where $Q$ is a dense subset of $(0,\infty)^I$. By the continuity of these expressions in $x \in (0,\infty)^I$, the equality holds for all $x \in (0,\infty)^I$ and is equivalent to
\begin{align}\label{eqn:proofCImaxstabledensity}
\exp\left(d_{A,B}(x) \right)
= \frac{W^{A \cup C}_C(x_{A \cup C}) W^{B \cup C}_C(x_{B \cup C})}{W^{A \cup B \cup C}_C(x_{A \cup B \cup C}) W^C_C(x_C)}, \qquad x \in (0,\infty)^I.
\end{align}
Here, $d_{A,B} \geq 0$ and $d_{A,B}$ is homogeneous of order $-1$, while the components $V^N_J$ that build the terms $W^N_M$ are homogeneous of order $-(|J|+1)$.
Now, replacing $x$ by $t^{-1} x$ for $t > 0$ in (\ref{eqn:proofCImaxstabledensity}), we see that the left-hand side grows exponentially in the variable $t$ as $t$ tends to $\infty$ if $d_{A,B}(x)>0$, while the right-hand side exhibits at most polynomial growth. Therefore, $d_{A,B}(x) = 0$ for $x \in (0,\infty)^I$. 
\item Both sides are equivalent to $G^A(x_A) G^B(x_B) = G^{A \cup B}(x_{A \cup B})$ for all $x \in (0,\infty)^I$. \qedhere
\end{enumerate}
\end{proof}

\begin{proof}[Proof of Theorem \ref{thm:CImaxstabledensity}]
The hypothesis follows from Proposition~\ref{prop:independence} and  \eqref{eqn:extzeroequivalence}.
\end{proof}

\begin{proof}[Proof of Lemma~\ref{lemma:pwindep}]
It suffices to show that for $x_{A_i} \in (0,\infty)^{A_i}$, $i=1,\dots,k$ and $r \in (0,\infty)$
\begin{align*}
\PP\left(X_{A_1} \leq x_{A_1},\dots,X_{A_k}\leq x_{A_k}\right) 
= \prod_{i=1}^k\PP\left(X_{A_i} \leq x_{A_i}\right).
\end{align*}
Using the notation $r_i=\sum_{j_i \in A_i} x^{-1}_{j_i}$, $u_{j_i}=(r_ix_{j_i})^{-1}$ for $j_i \in A_i$ and $Y_{i}=\max_{j_i \in A_i} u_{j_i} X_{j_i}$, $i=1,\dots,k$, we can rewrite this equality in the form
\begin{align*}
\PP\left(Y_1 \leq r_1^{-1},\dots, Y_k \leq r_k^{-1} \right) 
= \prod_{i=1}^k\PP\left(Y_i \leq r_i^{-1}\right),
\end{align*}
where the random vector $(Y_1,\dots,Y_k)$ is simple max-stable \citep{dehaan78} and has pairwise independent components due to our assumptions. Hence, by \cite{ber61} Theorem~2, the $Y_i$ are jointly independent, which entails the relation above.
\end{proof}

\begin{proof}[Proof of Corollary~\ref{cor:CImaxstabledensity}]
\mbox{$\ci_{i=1}^{k} X_{A_i} \mid X_{I \setminus \bigcup_{i = 1}^k A_i}$} implies 
\mbox{$X_{A_{i_1}} \ci X_{A_{i_2}} \mid X_{I \setminus \bigcup_{i = 1}^k A_i}$} for $i_1 \neq i_2$ and hence 
\mbox{$X_{A_{i_1}} \ci X_{A_{i_2}}$} by Theorem~\ref{thm:CImaxstabledensity}.
The hypothesis follows if we apply Lemma~\ref{lemma:pwindep} to the $X_{A_i}$, $i=1,\dots,k$.
\end{proof}

{\footnotesize
 \paragraph{Acknowledgments}
The authors would like to thank an anonymous referee for helpful comments improving the clarity of the paper. IP acknowledges funding from the SuSTaIn program - Engineering and Physical Sciences Research Council grant EP/D063485/1 - at the School of Mathematics of the University of Bristol. Part of this paper was written during a visit of KS at the University of Bristol funded by the SuSTaIn program and during a visit of IP at the University of Mannheim funded by the German Research Foundation DFG through the RTG 1953. IP and KS thank them for their generous hospitality.

\bibliographystyle{agsm}

}

\end{document}